\newtheorem{theorem}{Theorem}
\newtheorem{corollary}[theorem]{Corollary}
\newtheorem{proposition}[theorem]{Proposition}
\theoremstyle{definition}
\newtheorem{example}[theorem]{Example}
\newcommand{\bfs}{\mathbf{s}}
\newcommand{\C}{\mathbb{C}}
\newcommand{\N}{\mathbb{N}}
\newcommand{\Q}{\mathbb{Q}}
\newcommand{\R}{\mathbb{R}}
\newcommand{\Z}{\mathbb{Z}}
\newcommand{\nequiv}{\mathrel{\not\equiv}}
\newcommand{\colonequal}{\mathrel{\mathop:}=}
\newcommand{\size}[1]{\lvert{#1}\rvert}
\begin{document}

\title[Lucas congruences for the Ap\'ery numbers modulo $p^2$]{
Lucas congruences \\ for the Ap\'ery numbers modulo $p^2$
}

\author{Eric Rowland}
\address{
	Department of Mathematics \\
	Hofstra University \\
	Hempstead, NY \\
	USA
}
\author{Reem Yassawi}
\address{
School of Mathematics and Statistics\\
Open University\\
Milton Keynes \\
UK
}
\author{Christian Krattenthaler}
\address{
	Fakult\"at f\"ur Mathematik \\
	Universit\"at Wien \\
	Vienna \\
	Austria
}

\date{January 25, 2021}

\begin{abstract}
The sequence $A(n)_{n \geq 0}$ of Ap\'ery numbers can be interpolated to $\C$ by an entire function.
We give a formula for the Taylor coefficients of this function, centered at the origin, as a $\Z$-linear combination of multiple zeta values.
We then show that for integers $n$ whose base-$p$ digits belong to a certain set, $A(n)$ satisfies a Lucas congruence modulo $p^2$.
\end{abstract}

\maketitle

\section{Introduction}\label{introduction}

\makeatletter
\renewcommand*{\@makefnmark}{}
\footnotetext{This paper was originally posted on the arXiv by the first two authors.
Christian Krattenthaler became a coauthor after improving the proof of Theorem~\ref{multiple zeta value formula}.}
\makeatother

For each integer $n \geq 0$, the $n$th \emph{Ap\'ery number} is defined by
\[
	A(n) \colonequal \sum_{k \geq 0} \binom{n}{k}^2 \binom{n + k}{k}^2.
\]
These numbers arose in Ap\'ery's proof of the irrationality of $\zeta(3)$.
This sum is finite, since $\binom{n}{k} = 0$ when $k > n$.
The sequence $A(n)_{n \geq 0}$ is
\[
	1, 5, 73, 1445, 33001, 819005, 21460825, 584307365, \dots.
\]
The Ap\'ery numbers satisfy the recurrence
\begin{equation}\label{Apery recurrence}
	n^3 A(n) - (34 n^3 - 51 n^2 + 27 n - 5) A(n - 1) + (n - 1)^3 A(n - 2) = 0
\end{equation}
for all integers $n \geq 2$.

Exceptional properties of the Ap\'{e}ry sequence have been observed in many settings~\cite{Zagier 2017}.
Gessel~\cite{Gessel} showed that the Ap\'ery numbers satisfy the Lucas congruence
\begin{equation}\label{Lucas congruence}
	A(d + p n) \equiv A(d) A(n) \mod p
\end{equation}
for all $d \in \{0, 1, \dots, p - 1\}$ and $n \geq 0$.
Beukers~\cite{Beukers} established the supercongruence $A(p^\alpha n - 1) \equiv A(p^{\alpha - 1} n - 1) \mod p^{3 \alpha}$ for all primes $p \geq 5$, and Straub~\cite{Straub} showed that a related supercongruence holds more generally for a four-dimensional sequence containing $A(n)_{n \geq 0}$ as its diagonal.

Gessel also extended Congruence~\eqref{Lucas congruence} to a congruence modulo $p^2$ as follows.
Define the sequence $A'(n)_{n \geq 0}$ by
\begin{equation}\label{Gessel Apery derivative}
	A'(n) \colonequal 2 \sum_{k = 0}^n \binom{n}{k}^2 \binom{n + k}{k}^2 \left(H_{n + k} - H_{n - k}\right),
\end{equation}
where $H_k = 1 + \frac{1}{2} + \dots + \frac{1}{k}$ is the $k$th harmonic number.
The sequence $A'(n)_{n \geq 0}$ is
\[
	0, 12, 210, 4438, 104825, \frac{13276637}{5}, 70543291, \frac{67890874657}{35}, \dots.
\]
Then
\begin{equation}\label{Gessel congruence modulo p^2}
	A(d + p n) \equiv \left(A(d) + p n A'(d)\right) A(n) \mod p^2
\end{equation}
for all $d \in \{0, 1, \dots, p - 1\}$ and for all $n \geq 0$~\cite[Theorem~4]{Gessel}.

Gessel remarks that if $A(n)$ can be extended to a differentiable function $A(x)$ defined for $x \in \R_{\geq 0}$ such that $A(x)$ satisfies Recurrence~\eqref{Apery recurrence}, then $A'(n) = \left(\frac{d}{dx} A(x)\right)\vert_{x=n}$.
As shown by Zagier~\cite[Proposition 1]{Zagier 2017} and proved in an automated way by Osburn and Straub~\cite[Remark 2.5]{Osburn--Straub}, $A(n)$ can be extended to an entire function $A(z)$ satisfying
\begin{multline}\label{interpolation}
	z^3 A(z) - (34 z^3 - 51 z^2 + 27 z - 5) A(z - 1) + (z - 1)^3 A(z - 2) \\
	= \frac{8}{\pi^2} (2 z - 1) (\sin(\pi z))^2
\end{multline}
for all $z \in \C$.
Since both $\frac{8}{\pi^2} (2 z - 1) (\sin(\pi z))^2$ and its derivative vanish at integer values of $z$, it follows that $A'(n) = \left(\frac{d}{dz} A(z)\right)\vert_{z=n}$, hence the notation $A'(n)$.
Therefore the extension $A(z)$ confirms Gessel's intuition.

In this article we use an elementary approach to write the coefficients in the Taylor series of $A(z) = \sum_{m \geq 0} a_m z^m$ at $z = 0$ as an explicit $\Z$-linear combination of multiple zeta values.
A striking fact is that the coefficient of each multiple zeta value is a signed power of~$2$.
Let $s_1, s_2, \dots, s_j$ be positive integers with $s_1 \geq 2$.
The \emph{multiple zeta value} $\zeta(s_1, s_2, \dots, s_j)$ is defined as
\[
	\zeta(s_1, s_2, \dots, s_j)
	\colonequal \sum_{n_1 > n_2 > \dots > n_j > 0} \frac{1}{n_1^{s_1} n_2^{s_2} \cdots n_j^{s_j}}.
\]
The \emph{weight} of $\zeta(s_1, s_2, \dots, s_j)$ is $s_1 + s_2 + \dots + s_j$.

Let $\chi(m)$ be the characteristic function of the set of odd numbers.
That is, $\chi(m) = 0$ if $m$ is even and $\chi(m) = 1$ if $m$ is odd.
For a tuple $\bfs = (s_1, s_2, \dots, s_j)$, let $e(\bfs) = \size{\{i : \text{$2 \leq i \leq j$ and $s_i = 2$}\}}$.

\begin{theorem}\label{multiple zeta value formula}
Let $A(z) = \sum_{m \geq 0} a_m z^m$ be the Taylor series of the Ap\'ery function, centered at the origin.
For each $m \geq 1$,
\[
	a_m =
	\sum_{\bfs} (-1)^{\frac{m - s_1}{2}} 2^{e(\bfs) + \chi(m)} \zeta(s_1, s_2, \dots, s_j),
\]
where the sum is over all tuples $\bfs = (s_1, s_2, \dots, s_j)$, with $j \geq 1$, of non-negative integers satisfying
\begin{itemize}
\item $s_1 + s_2 + \dots + s_j = m$,
\item $s_1 = 3$ if $m$ is odd and $s_1 \in \{2, 4\}$ if $m$ is even, and
\item $s_i \in \{2, 4\}$ for all $i \in \{2, \dots, j\}$.
\end{itemize}
\end{theorem}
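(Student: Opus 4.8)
The plan is to compute the $a_m$ by extracting Taylor coefficients from an explicit closed form for $A(z)$. The key input is the representation
\[
	A(z) = \sum_{k \geq 0} \binom{z}{k}^2 \binom{z + k}{k}^2,
\]
where $\binom{z}{k} = \frac{z (z - 1) \cdots (z - k + 1)}{k!}$ is a polynomial in $z$; this is, in essence, the construction of the entire interpolation due to Osburn and Straub. One should first record that the series converges to an entire function: each summand is a polynomial, and for $\size{z} \leq R$ the $k$th term is bounded in modulus by $\frac{R^2 (1 + R)^2}{k^2} \bigl(\frac{\sinh \pi R}{\pi R}\bigr)^2$, using $\prod_{i \geq 1} (1 + R^2/i^2) = \frac{\sinh \pi R}{\pi R}$, so the convergence is locally uniform. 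That this sum satisfies \eqref{interpolation}, rather than merely interpolating the Ap\'ery numbers, can be checked by creative telescoping: applying the classical Ap\'ery certificate to the summand, summing over $k \geq 0$, and evaluating the telescoped boundary term at infinity via $\prod_{i \geq 1} (1 - z^2/i^2) = \frac{\sin \pi z}{\pi z}$ reproduces exactly $\frac{8}{\pi^2} (2 z - 1) (\sin \pi z)^2$. (As a consistency check, differentiating term by term at $z = n$ recovers Gessel's formula~\eqref{Gessel Apery derivative}.)

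Next I would factor the summand. From $\binom{z}{k} = \frac{(-1)^{k-1} z}{k} \prod_{i=1}^{k-1} (1 - z/i)$ and $\binom{z+k}{k} = \prod_{i=1}^{k} (1 + z/i)$ one gets, for $k \geq 1$,
\[
	\binom{z}{k}^2 \binom{z+k}{k}^2 = \left( \frac{z^2}{k^2} + \frac{2 z^3}{k^3} + \frac{z^4}{k^4} \right) \prod_{i=1}^{k-1} \left( 1 - \frac{z^2}{i^2} \right)^2,
\]
while the $k = 0$ term is $1$ and contributes nothing to $a_m$ for $m \geq 1$.

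The heart of the argument is to expand $\prod_{i=1}^{k-1} (1 - z^2/i^2)^2 = \prod_{i=1}^{k-1} \bigl( 1 - \frac{2 z^2}{i^2} + \frac{z^4}{i^4} \bigr)$ by choosing, for each $i$, one of the three monomials $1$, $-\frac{2 z^2}{i^2}$, $\frac{z^4}{i^4}$. A contribution of $z$-degree $m - s_1$ (where $s_1 \in \{2, 3, 4\}$ will come from the prefactor) corresponds to a choice of indices $k - 1 \geq i_1 > i_2 > \dots > i_{j-1} \geq 1$ together with exponents $s_2, \dots, s_j \in \{2, 4\}$ satisfying $s_2 + \dots + s_j = m - s_1$, and equals $(-2)^{e(\bfs)} \, i_1^{-s_2} i_2^{-s_3} \cdots i_{j-1}^{-s_j}$, where $\bfs = (s_1, \dots, s_j)$. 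Multiplying by the prefactor $\frac{z^2}{k^2} + \frac{2 z^3}{k^3} + \frac{z^4}{k^4}$ supplies the leading entry $s_1$, together with an extra factor $2$ precisely when $s_1 = 3$; summing over $k \geq 1$ with $n_1 = k > n_2 = i_1 > \dots > n_j = i_{j-1}$ then yields $(-2)^{e(\bfs)} \zeta(s_1, s_2, \dots, s_j)$ (times $2$ when $s_1 = 3$). The interchange of the sum over $k$ with coefficient extraction is justified by the absolute convergence noted above, and the multiple zeta value converges because $s_1 \geq 2$. Comparing parities, $z^{m-3}$ occurs in the product only when $m$ is odd, forcing $s_1 = 3$, whereas $z^{m-2}$ and $z^{m-4}$ occur only when $m$ is even, forcing $s_1 \in \{2, 4\}$.

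It remains to match the constant. Write $(-2)^{e(\bfs)} = (-1)^{e(\bfs)} 2^{e(\bfs)}$; since $m - s_1 = s_2 + \dots + s_j$ with every $s_i \in \{2, 4\}$, reducing $\frac{m - s_1}{2}$ modulo $2$ leaves $\size{\{i : 2 \leq i \leq j \text{ and } s_i = 2\}} = e(\bfs)$, so $(-2)^{e(\bfs)} = (-1)^{\frac{m - s_1}{2}} 2^{e(\bfs)}$; and the extra factor $2$ present exactly in the odd case is $2^{\chi(m)}$. This produces the stated coefficient $(-1)^{\frac{m - s_1}{2}} 2^{e(\bfs) + \chi(m)}$. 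I expect the main obstacle to be the first paragraph — pinning down and justifying the closed form for $A(z)$, in particular the boundary-term computation showing that this sum is the interpolation of \eqref{interpolation} and not some other entire function with the same integer values. Once that is secured the remaining work is bookkeeping; the one non-obvious step is writing $(1 - z^2/i^2)^2$ as $1 - \frac{2 z^2}{i^2} + \frac{z^4}{i^4}$ and reading off the product factor by factor, which is exactly what makes the coefficient of each multiple zeta value a signed power of $2$.
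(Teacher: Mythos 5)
Your proof is correct and follows essentially the same route as the paper: you factor the $k$th summand as $\bigl(\tfrac{z^2}{k^2}+\tfrac{2z^3}{k^3}+\tfrac{z^4}{k^4}\bigr)\prod_{i=1}^{k-1}\bigl(1-\tfrac{z^2}{i^2}\bigr)^2$ (your binomial-coefficient form of the summand is the same polynomial as the paper's Gamma-quotient form), expand the product monomial by monomial, and read off the multiple zeta values with coefficients $(-2)^{e(\bfs)}$ times $2^{\chi(m)}$, exactly as in the paper's Equation~\eqref{summand} and the discussion following it. Your worry about identifying the sum with the solution of \eqref{interpolation} is not actually needed for this theorem, since the Ap\'ery function in the statement is by definition this series.
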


The first several coefficients are
\begin{align*}
	a_0 &= 1 \\
	a_1 &= 0 \\
	a_2 &= \zeta(2) \\
	a_3 &= 2 \zeta(3) \\
	a_4 &= \zeta(4) - 2 \zeta(2,2) \\
	a_5 &= -4 \zeta(3, 2) \\
	a_6 &= \zeta(2, 4) - 2 \zeta(4, 2) + 4 \zeta(2, 2, 2) \\
	a_7 &= 2 \zeta(3, 4) + 8 \zeta(3, 2, 2) \\
	a_8 &= \zeta(4, 4) - 2 \zeta(2, 2, 4) - 2 \zeta(2, 4, 2) + 4 \zeta(4, 2, 2) - 8 \zeta(2, 2, 2, 2) \\
	a_9 &= -4 \zeta(3, 2, 4) - 4 \zeta(3, 4, 2) - 16 \zeta(3, 2, 2, 2).
\end{align*}
Let $F(m)$ be the $m$th Fibonacci number.
Since the number of integer compositions of $m$ using parts $1$ and $2$ is $F(m + 1)$, Theorem~\ref{multiple zeta value formula} expresses $a_m$ as a linear combination of $F(\frac{m}{2} + 1)$ multiple zeta values if $m$ is even and $F(\frac{m - 1}{2})$ multiple zeta values if $m$ is odd.

Let $P(m)$ be the number of integer compositions of $m - 3$ using parts $2$ and~$3$.
Then $P(m)$ is the $m$th Padovan number and satisfies the recurrence $P(m) = P(m - 2) + P(m - 3)$ with initial conditions $P(3) = 1$, $P(4) = 0$, $P(5) = 1$.
Let $d_m$ be the dimension of the $\Q$-vector space spanned by the weight-$m$ multiple zeta values.
Recent progress by Brown~\cite{Brown} shows that $d_m \leq P(m + 3)$.
For $m \geq 13$, the representation of $a_m$ in Theorem~\ref{multiple zeta value formula} uses fewer than $P(m + 3)$ multiple zeta values.
Since $F(\frac{m}{2} + 1) > P(m + 3)$ for $m \in \{4, 6, 8, 10, 12\}$, this implies that $a_4, a_6, a_8, a_{10}, a_{12}$ can be written as $\Q$-linear combinations of fewer multiple zeta values than Theorem~\ref{multiple zeta value formula} provides.
Namely,
\begin{align*}
	a_4 &= -\tfrac{1}{2} \zeta(4) \\
	a_6 &= \tfrac{3}{2} \zeta(6) - 3 \zeta(4,2) \\
	a_8 &= -\tfrac{13}{24} \zeta(8) + 6 \zeta(4,2,2) \\
	a_{10} &= \tfrac{7}{8} \zeta(10) + 3 \zeta(2, 4, 4) - 12 \zeta(4, 2, 2, 2) \\
	a_{12} &= -\tfrac{915}{22112} \zeta(12) + 6 \zeta(4, 2, 2, 4) + 6 \zeta(4, 2, 4, 2) + 6 \zeta(4, 4, 2, 2) + 24 \zeta(4, 2, 2, 2, 2).
\end{align*}
We prove Theorem~\ref{multiple zeta value formula} in Section~\ref{Taylor coefficients}.
The proof technique can also be applied to compute the Taylor coefficients for a larger family of hypergeometric functions.
We remark that there are some parallels between Theorem~\ref{multiple zeta value formula} and work of Cresson, Fischler, and Rivoal~\cite{Cresson--Fischler--Rivoal}, who show that a class of hypergeometric series can be decomposed as $\Q$-linear combinations of multiple zeta values.
Numerically, Golyshev and Zagier~\cite[Section~2.4]{Golyshev--Zagier} also obtained multiple zeta values in coefficients of a formal power series related to the Ap\'ery numbers.

Returning to congruences for $A(n)$ in Section~\ref{congruences}, we consider the following question.
For which base-$p$ digits $d$ does Congruence~\eqref{Lucas congruence} hold not just modulo $p$ but modulo $p^2$?
The following theorem characterizes such digits.
Let
\[
	D(p) = \left\{d \in \{0, 1, \dots, p - 1\} : A(d) \equiv A(p - 1 - d) \mod p^2\right\}.
\]

\begin{theorem}\label{Lucas congruence p^2}
Let $p$ be a prime, and let $d \in \{0, 1, \dots, p - 1\}$.
The congruence $A(d + p n) \equiv A(d) A(n) \mod p^2$ holds for all $n \in \Z$ if and only if $d \in D(p)$.
\end{theorem}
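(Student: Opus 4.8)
\noindent\emph{Proof plan.}
The plan is to combine Gessel's congruence~\eqref{Gessel congruence modulo p^2} with the reflection symmetry $A(z)=A(-1-z)$ of the Ap\'ery function (which is immediate from~\eqref{interpolation} together with uniqueness of the entire solution, cf.~\cite{Zagier 2017}; it gives $A(-1-n)=A(n)$ for all $n\in\Z$, in particular $A(-1)=A(0)=1$) and one new congruence modulo~$p^2$ relating $A(p-1-d)$ to $A(d)$ and $A'(d)$. The ``only if'' direction is then immediate: assuming $A(d+pn)\equiv A(d)A(n)\pmod{p^2}$ for all $n\in\Z$ and taking $n=-1$, the reflection symmetry gives $A(d-p)=A(p-1-d)$ and $A(-1)=1$, so $A(p-1-d)\equiv A(d)\pmod{p^2}$; that is, $d\in D(p)$.

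The heart of the matter is the congruence
\[
	A(p-1-d)\equiv A(d)-p\,A'(d)\pmod{p^2}\qquad(d\in\{0,1,\dots,p-1\}).
\]
I would prove it by expanding $A(p-1-d)=\sum_{k\ge 0}\binom{p-1-d}{k}^2\binom{p-1-d+k}{k}^2$ and using the identities
\[
	\binom{p-1-d}{k}=(-1)^k\binom{d+k}{k}\prod_{i=1}^{k}\Bigl(1-\tfrac{p}{d+i}\Bigr),\qquad
	\binom{p-1-d+k}{k}=(-1)^k\binom{d}{k}\prod_{i=1}^{k}\Bigl(1-\tfrac{p}{d-k+i}\Bigr),
\]
whose denominators are all coprime to $p$ precisely when $0\le k\le\min(d,p-1-d)$. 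For such $k$ one has $\prod_i(1-p/j)\equiv 1-p\sum_i 1/j\pmod{p^2}$, which turns the $k$th summand of $A(p-1-d)$ into $\binom{d}{k}^2\binom{d+k}{k}^2\bigl(1-2p(H_{d+k}-H_{d-k})\bigr)$ modulo $p^2$. Outside this range a Kummer-type count shows that one of the consecutive integers forming the numerator of $\binom{p-1-d+k}{k}$, respectively of $\binom{d+k}{k}$, is a multiple of $p$, so the relevant terms of $A(p-1-d)$, of $A(d)$, and of $p\,A'(d)$ are all $\equiv 0\pmod{p^2}$ and can be dropped. Summing what survives and recognizing $A'(d)=2\sum_k\binom{d}{k}^2\binom{d+k}{k}^2(H_{d+k}-H_{d-k})$ yields the displayed congruence.

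Granting it, the ``if'' direction follows. If $d\in D(p)$ then the congruence forces $p\,A'(d)\equiv 0\pmod{p^2}$, i.e.\ $A'(d)\equiv 0\pmod p$; since the condition defining $D(p)$ is symmetric under $d\leftrightarrow p-1-d$, also $p-1-d\in D(p)$, and the congruence applied to $p-1-d$ gives $A'(p-1-d)\equiv 0\pmod p$ as well. For $n\ge 0$, \eqref{Gessel congruence modulo p^2} now gives $A(d+pn)\equiv(A(d)+pn A'(d))A(n)\equiv A(d)A(n)\pmod{p^2}$. For $n=-m$ with $m\ge 1$, the reflection symmetry gives $A(d+pn)=A\bigl((p-1-d)+p(m-1)\bigr)$, so applying~\eqref{Gessel congruence modulo p^2} with digit $p-1-d$ and with $m-1\ge 0$, then using $A'(p-1-d)\equiv 0\pmod p$, $A(p-1-d)\equiv A(d)\pmod{p^2}$, and $A(m-1)=A(-m)=A(n)$, again yields $A(d+pn)\equiv A(d)A(n)\pmod{p^2}$. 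The one genuinely delicate step is the congruence $A(p-1-d)\equiv A(d)-p\,A'(d)\pmod{p^2}$: the individual manipulations are elementary, but one must track the summation ranges and the $p$-adic valuations of the binomial coefficients carefully so that no term is miscounted when passing to the modulus $p^2$; the rest is bookkeeping with the reflection symmetry and a direct appeal to Gessel's theorem.
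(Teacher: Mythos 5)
Your proposal is correct, but it takes a genuinely different route to the key identity. The paper obtains $A(p-1-d)\equiv A(d)-p\,A'(d)\pmod{p^2}$ essentially for free, by first extending Gessel's congruence~\eqref{Gessel congruence modulo p^2} to all $n\in\Z$ (Theorem~\ref{general Gessel mod p^2}, proved by showing that the correction terms $c_d$ and $nA'(d)A(n)$ satisfy the same recurrence modulo $p$, using the differentiated functional equation~\eqref{interpolation}) and then specializing to $n=-1$ together with the reflection formula; the ``if'' and ``only if'' directions then both fall out of that single extended congruence. You instead prove the identity $A(p-1-d)\equiv A(d)-p\,A'(d)\pmod{p^2}$ directly by elementary manipulation of the binomial sums, and you avoid ever needing Gessel's congruence for negative $n$ by folding $n=-m$ back to the digit $p-1-d$ with non-negative quotient $m-1$ via reflection (using the easily checked symmetry of $D(p)$ under $d\leftrightarrow p-1-d$). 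Your computation checks out: for $0\le k\le\min(d,p-1-d)$ all the shifted denominators $d+i$ and $d-k+i$ lie in $\{1,\dots,p-1\}$, and the two correction products combine telescopically to $1-2p(H_{d+k}-H_{d-k})$; in the range $p-1-d<k\le d$ one should note explicitly that $H_{d+k}$ contains the term $1/p$, so the $p\,A'(d)$ terms have $p$-adic valuation at least $1+2-1=2$, which is exactly enough. Your approach is more self-contained (it uses only Gessel's original theorems for $n\ge0$ plus the reflection formula) at the price of a more delicate hand computation, whereas the paper's recurrence argument is cleaner and yields the stronger Theorem~\ref{general Gessel mod p^2} as a reusable intermediate result. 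One small caveat: justifying $A(-1-z)=A(z)$ by ``uniqueness of the entire solution'' of~\eqref{interpolation} is not immediate (uniqueness for a three-term functional equation requires an argument); the paper instead reads the symmetry off the hypergeometric representation~\eqref{Apery hypergeometric function}, and you should cite Proposition~\ref{reflection formula} rather than re-derive it.
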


In particular, if $n$ is a non-negative integer and all digits in its standard base-$p$ representation $n_\ell \cdots n_1 n_0$ belong to $D(p)$, then
\[
	A(n) \equiv A(n_0) A(n_1) \cdots A(n_\ell) \mod p^2.
\]
Theorem~\ref{Lucas congruence p^2} has an analogue for binomial coefficients, established by the first-named author~\cite{Rowland}.

\section{Taylor coefficients of the Ap\'ery function}\label{Taylor coefficients}

In this section we give a proof of Theorem~\ref{multiple zeta value formula}.
Let $\N = \{0, 1, 2, \dots\}$.
The sequence $A(n)_{n \geq 0}$ can be interpolated to $\C$ using the gamma function $\Gamma(z)$.
Recall that $\Gamma(z)$ is a meromorphic function satisfying
\[
	\Gamma(1)=1 \mbox{ and } \Gamma(z+1)=z\Gamma(z)
\]
for $z\not\in -\N$.
The gamma function has simple poles at the non-positive integers.

For $n \geq 0$, we can write $A(n)$ as
\begin{align*}
	A(n)
	&= \sum_{k \geq 0} \binom{n}{k}^2 \binom{n + k}{k}^2 \\
	&= \sum_{k \geq 0} \frac{\Gamma(n + k + 1)^2}{\Gamma(n - k + 1)^2 \Gamma(k + 1)^4}.
\end{align*}
We extend $A(n)$ to complex values by defining
\[
	A(z) = \sum_{k \geq 0} \frac{\Gamma(z + k + 1)^2}{\Gamma(z - k + 1)^2 \Gamma(k + 1)^4}.
\]
Note that for each $k\in\N$ the function $\frac{\Gamma(z + k + 1)^2}{\Gamma(z - k + 1)^2 \Gamma(k+1)^4}$ is a polynomial in $z$.
Furthermore, for each $z\in\C$, the series $\sum_{k\geq 0} \frac{\Gamma(z + k + 1)^2}{\Gamma(z - k + 1)^2 \Gamma(k+1)^4}$ is locally uniformly convergent.
Thus $A(z)$ is an entire function, which we call the {\em Ap\'{e}ry function}.
We remark that $A(z)$ can be written using the hypergeometric function ${}_4F_3$.
Let $(z)_k \colonequal z (z + 1) (z + 2) \cdots (z + k - 1)$ be the Pochhammer symbol (rising factorial).
By writing $\frac{\Gamma(z + k + 1)^2}{\Gamma(z - k + 1)^2} = (-z)_k^2 (z + 1)_k^2$, we see that
\begin{align}
	A(z)
	&= \sum_{k \geq 0} \frac{(-z)_k (-z)_k (z + 1)_k (z + 1)_k}{k!^4} \label{Apery hypergeometric function} \\
	&= {}_4F_3(-z, -z, z + 1, z + 1; 1, 1, 1; 1). \nonumber
\end{align}

Straub~\cite[Remark~1.3]{Straub} proved the reflection formula $A(-1 - n) = A(n)$ for all $n \in \Z$.
Equation~\eqref{Apery hypergeometric function} shows that this formula also holds for non-integers, since the hypergeometric series is invariant under replacing $z$ with $-1 - z$.

\begin{proposition}\label{reflection formula}
For all $z \in \C$, we have $A(-1 - z) = A(z)$.
\end{proposition}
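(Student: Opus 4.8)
The plan is to read the symmetry off directly from the hypergeometric representation~\eqref{Apery hypergeometric function}. Recall from the discussion above that $A(z) = \sum_{k \geq 0} \frac{(-z)_k^2 (z+1)_k^2}{k!^4}$, that each summand is a polynomial in $z$, and that the series converges locally uniformly on all of $\C$. Consequently, to prove $A(-1-z) = A(z)$ for every $z \in \C$ it suffices to check that each individual summand is invariant under the substitution $z \mapsto -1-z$.

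This invariance is immediate. Under $z \mapsto -1-z$ we have $-z \mapsto z+1$ and $z+1 \mapsto -z$, so the two Pochhammer symbols $(-z)_k$ and $(z+1)_k$ are simply interchanged. Since each occurs to the second power in the summand, the product $(-z)_k^2 (z+1)_k^2$ is fixed, and the denominator $k!^4$ does not involve $z$. Summing over $k$ then gives $A(-1-z) = A(z)$. Equivalently, one may run the same computation in terms of gamma functions, using the identity $(-z)_k^2 (z+1)_k^2 = \Gamma(z+k+1)^2 / \Gamma(z-k+1)^2$ noted above.

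I do not anticipate any genuine obstacle. The one subtlety worth flagging is that the proposition cannot be deduced from Straub's integer-valued reflection formula $A(-1-n) = A(n)$ by analytic continuation alone: the set $\N$ on which the two entire functions $A(-1-z)$ and $A(z)$ are a priori known to agree has no accumulation point in $\C$, so it does not by itself force the functions to coincide. Working at the level of the individual terms of the hypergeometric series sidesteps this entirely, and makes the argument a one-line observation.
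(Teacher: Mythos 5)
Your proof is correct and matches the paper's argument exactly: the paper also deduces the reflection formula from the invariance of the hypergeometric series \eqref{Apery hypergeometric function} under $z \mapsto -1-z$, which swaps the parameters $-z$ and $z+1$. Your additional remark that the identity cannot be obtained from the integer case by analytic continuation alone is a sensible observation, but the core argument is the same as the paper's.
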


\begin{figure}
	\includegraphics[width=.7\textwidth]{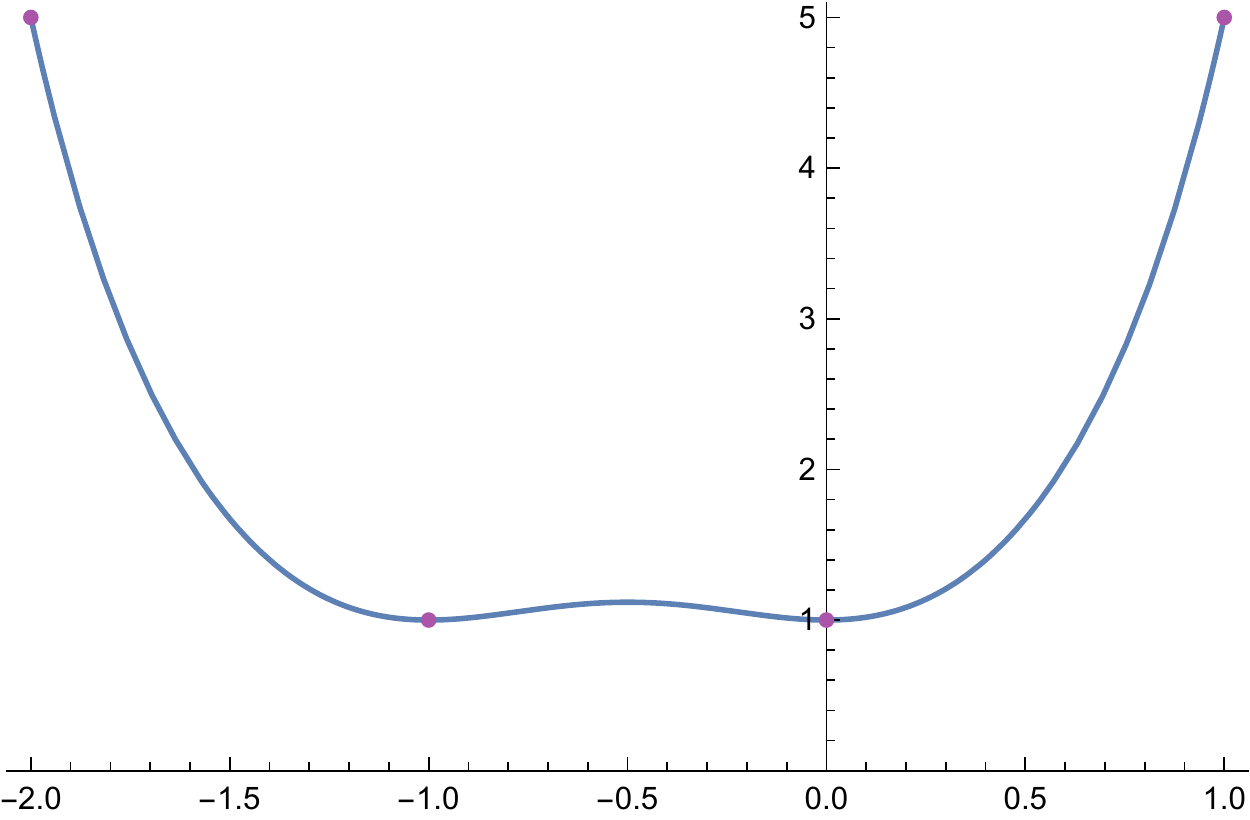}
	\caption{A plot of $A(z)$ for real $z$ in the interval $-2 \leq z \leq 1$, showing the reflection symmetry $A(-1 - z) = A(z)$.}
	\label{plot}
\end{figure}

Figure~\ref{plot} shows this symmetry on the real line.
In light of Proposition~\ref{reflection formula}, Theorem~\ref{multiple zeta value formula} also gives us the Taylor expansion of $A(z)$ at $z = -1$ for free.
We note that, at the symmetry point $z=-\frac{1}{2}$, Zagier has shown that $A(-\frac{1}{2})=\frac{16}{\pi^2}L(f,2)$ where $L(f,2)$ is the critical $L$-value of $f$, the unique normalized Hecke eigenform of weight $4$ for $\Gamma_0(8)$; see \cite{Zagier 2017} for an account and \cite{Zudilin} for a generalization.
There is no reason to expect that the Taylor coefficients of $A(z)$ centered at non-integer points are $\Q$-linear combinations of multiple zeta values.

Let
\begin{equation}\label{series expansion}
	A(z) = \sum_{k \geq 0}\frac{\Gamma(z + k + 1)^2}{\Gamma(z - k + 1)^2 \Gamma(k+1)^4} = \sum_{m \geq 0} a_m z^m
\end{equation}
be the Taylor series expansion of the Ap\'{e}ry function centered at the origin.
It is possible to compute $a_m$ by directly evaluating the $m$th derivative $A^{(m)}(z)$ at $z = 0$.

\begin{example}
The derivative of the summand is
\[
	\tfrac{1}{k!^4} \tfrac{d}{d z} \tfrac{\Gamma(z + k + 1)^2}{\Gamma(z - k + 1)^2}
	= \tfrac{1}{k!^4} \tfrac{\Gamma(z + k + 1)^2}{\Gamma(z - k + 1)^2}
	\left(2 \psi(z + k + 1) - 2 \psi(z - k + 1)\right),
\]
where the digamma function $\psi(z) \colonequal \frac{\Gamma'(z)}{\Gamma(z)}$ is the logarithmic derivative of $\Gamma(z)$.
This agrees with the expression for $A'(n)$ in Equation~\eqref{Gessel Apery derivative}.
Since $\frac{\Gamma(z + k + 1)^2}{\Gamma(z - k + 1)^2} = O(z^2)$ as $z \to 0$ and $2 \psi(z + k + 1) - 2 \psi(z - k + 1)$ has a simple pole at $0$ for each $k$, we have $a_1 = \frac{A'(0)}{1!} = 0$.
Similarly, the second derivative is
\begin{multline*}
	\tfrac{1}{k!^4} \tfrac{d^2}{d z^2} \tfrac{\Gamma(z + k + 1)^2}{\Gamma(z - k + 1)^2}
	= \tfrac{1}{k!^4} \tfrac{\Gamma(z + k + 1)^2}{\Gamma(z - k + 1)^2}
	\big(4 \psi(z + k + 1)^2 + 2 \psi'(z + k + 1) \\
	- 8 \psi(z + k + 1) \psi(z - k + 1) + 4 \psi(z - k + 1)^2 - 2 \psi'(z - k + 1)\big).
\end{multline*}
The series expansions of $\psi(z + k + 1)$ and $\psi(z - k + 1)$ imply $A''(0) = \sum_{k \geq 1} \frac{ 2}{ k^2}= 2\zeta(2)$, so $a_2 = \frac{A''(0)}{2!} = \zeta(2)$.
\end{example}

Theorem~\ref{multiple zeta value formula} can be proved by carrying out the same approach for general $m$.
However, we give a shorter proof in the spirit of \cite[Section~1.4]{Fischler}.

\begin{proof}[Proof of Theorem~\ref{multiple zeta value formula}]
We consider the summand in Equation~\eqref{series expansion}.
For $k=0$, we have $\frac {\Gamma^2(z+k+1)} {\Gamma^2(z-k+1) k!^4}=1$. 
For $k\geq 1$, we have
\begin{align}\label{summand}
\nonumber\frac {\Gamma^2(z+k+1)} {\Gamma^2(z-k+1) k!^4}
&=\tfrac {(z-k+1)^2\cdots(z-1)^2z^2(z+1)^2\cdots(z+k)^2} {k!^4}\\
\nonumber&=
\left(1-\tfrac {z} {k-1}\right)^2
\cdots
\left(1-\tfrac {z} {1}\right)^2
\left(1+\tfrac {z} {1}\right)^2
\cdots
\left(1+\tfrac {z} {k-1}\right)^2
\tfrac {z^2} {k^2}\left(1+\tfrac {z} {k}\right)^2\\
\nonumber&=
\left(1-\tfrac {z^2} {(k-1)^2}\right)^2
\cdots
\left(1-\tfrac {z^2} {1^2}\right)^2
\tfrac {z^2} {k^2}\left(1+\tfrac {z} {k}\right)^2\\
&=
\left(1-2\tfrac {z^2} {1^2}+\tfrac {z^4} {1^4}\right)
\cdots
\left(1-2\tfrac {z^2} {(k-1)^2}+\tfrac {z^4} {(k-1)^4}\right)
\left(\tfrac {z^2} {k^2}+2\tfrac {z^3} {k^3}+\tfrac {z^4} {k^4}\right).
\end{align}
Recall that $\chi(m)$ is the characteristic function of the set of odd numbers, and $e(\bfs) = \size{\{i : \text{$2 \leq i \leq j$ and $s_i = 2$}\}}$ for a tuple $\bfs = (s_1, s_2, \dots, s_j)$.
By expanding
the product \eqref{summand} to extract the coefficient of $z^m$, one sees that this coefficient equals
$$
\sum_{\substack{\mathbf s=(s_1,\dots,s_j) \\ s_1+\dots+s_j=m}}
\sum_{k=n_1>n_2>\dots>n_j>0}
(-1)^{\frac{m - s_1}{2}} 2^{e(\bfs) + \chi(m)}
\frac {1} {n_1^{s_1} n_2^{s_2}\cdots n_j^{s_j}},
$$
where the outer sum is over all $\bfs$ described in the statement of Theorem~\ref{multiple zeta value formula}.
Now we sum over all $k$ to obtain $a_m$, and the statement follows.
\end{proof}

As discussed in Section~\ref{introduction}, the coefficients $a_4, a_6, a_8, a_{10}, a_{12}$ can be written as $\Q$-linear combinations of fewer multiple zeta values than given by Theorem~\ref{multiple zeta value formula}.
The strategy given in the following example can be used to reduce $a_m$ for all even $m \geq 4$.

\begin{example}
For $m = 10$, Theorem~\ref{multiple zeta value formula} gives
\begin{multline*}
	a_{10} = \zeta(2, 4, 4) - 2 \zeta(4, 2, 4) - 2 \zeta(4, 4, 2) \\
	+ 4 \zeta(2, 2, 2, 4) + 4 \zeta(2, 2, 4, 2) + 4 \zeta(2, 4, 2, 2) - 8 \zeta(4, 2, 2, 2) \\
	+ 16 \zeta(2, 2, 2, 2, 2).
\end{multline*}
We will rewrite several products $\zeta(s_1, s_2, \dots, s_j) \zeta(i)$ as linear combinations of multiple zeta values.
For example,
\begin{multline*}
	\left(\sum_{k_1>k_2>0 } \frac{1}{k_{1}^{a} k_{2}^{b}}\right) \left(\sum_{k_3 > 0} \frac{1}{k_{3}^{c}}\right) \\
	= \sum_{k_3>k_1>k_2>0 } \frac{1}{k_{1}^{a}k_{2}^{b}k_{3}^{c}}
	+ \sum_{k_1>k_3>k_2>0 } \frac{1}{k_{1}^{a}k_{2}^{b} k_{3}^{c}}
	+ \sum_{k_1>k_2>k_3 >0 } \frac{1}{k_{1}^{a} k_{2}^{b}k_{3}^{c}} \\
	+ \sum_{k_1>k_2>0 } \frac{1}{k_{1}^{a+c}k_{2}^{b}} 
	+ \sum_{k_1>k_2>0 } \frac{1}{k_{1}^{a}k_{2}^{b+c}}, 
\end{multline*}
so that 
\begin{align}\label{zeta relation}
	\zeta(a,b)\zeta(c) = \zeta(c,a,b) + \zeta(a,c,b) + \zeta(a,b,c) + \zeta(a+c,b)+ \zeta(a,b+c).
\end{align}
As in the derivation of Equation~\eqref{zeta relation}, we have $\zeta(a)\zeta(b)= \zeta(a,b)+\zeta(b,a)+\zeta(a+b)$.

We first express $-2 \zeta(4, 2, 4) - 2 \zeta(4, 4, 2)$ in terms of $\zeta(2, 4, 4)$ and $\zeta(10)$.
By \eqref{zeta relation} we have
\[
	\zeta(4, 4) \zeta(2)
	= \zeta(2,4,4) + \zeta(4,2,4) + \zeta(4,4,2) + \zeta(6,4) + \zeta(4,6).
\]
The relations $\zeta(4) \zeta(4) = 2 \zeta(4, 4) + \zeta(8)$ and $\zeta(4) \zeta(6) = \zeta(4, 6) + \zeta(6, 4) + \zeta(10)$ allow us to write
\begin{align*}
	-2 \zeta(4, 2, 4) - 2 \zeta(4, 4, 2)
	&= 2 \zeta(2, 4, 4) + 2 \zeta(4) \zeta(6) - 2 \zeta(10) - \zeta(4)^2 \zeta(2) + \zeta(8) \zeta(2) \\
	&= 2 \zeta(2, 4, 4) - \tfrac{3}{40} \zeta(10)
\end{align*}
using $\zeta(2) = \frac{\pi^2}{6}$, $\zeta(4) = \frac{\pi^4}{90}$, $\zeta(6) = \frac{\pi^6}{945}$, $\zeta(8) = \frac{\pi^8}{9450}$, and $\zeta(10) = \frac{\pi^{10}}{93555}$.
Next we rewrite
\[
	4 \zeta(2, 2, 2, 4) + 4 \zeta(2, 2, 4, 2) + 4 \zeta(2, 4, 2, 2).
\]
For this we use
\begin{align*}
	\zeta(2, 2, 2) \zeta(4) - \zeta(2, 2, 2, 4) - {} & \zeta(2, 2, 4, 2) - \zeta(2, 4, 2, 2) - \zeta(4, 2, 2, 2) \\
	&= \zeta(2, 2, 6) + \zeta(2, 6, 2) + \zeta(6, 2, 2) \\
	&= \zeta(2, 2) \zeta(6) - (\zeta(8, 2) + \zeta(2, 8)) \\
	&= \zeta(2, 2) \zeta(6) - (\zeta(2) \zeta(8) - \zeta(10)).
\end{align*}
Therefore $4 \zeta(2, 2, 2, 4) + 4 \zeta(2, 2, 4, 2) + 4 \zeta(2, 4, 2, 2)$ can be written using $\zeta(2, 2) \zeta(6)$, $\zeta(2, 2, 2) \zeta(4)$, $\zeta(4, 2, 2, 2)$, and $\zeta(10)$.
Finally, we use
\[
	\zeta(\underbrace{2,\dots,2}_{j})= \frac{\pi^{2j}}{(2j+1)!}
\]
(see for example \cite{Hoffman}) to write $\zeta(2, 2)$, $\zeta(2, 2, 2)$, and $\zeta(2, 2, 2, 2, 2)$.
Consolidating these results, we obtain
\[
	a_{10} = \tfrac{7}{8} \zeta(10) + 3 \zeta(2, 4, 4) - 12 \zeta(4, 2, 2, 2).
\]
\end{example}

\section{Lucas congruences modulo $p^2$}\label{congruences}

Gessel~\cite{Gessel} proved three theorems on congruences for $A(n)$ where $n \geq 0$.
In this section we generalize these theorems to $n \in \Z$, making substantial use of the reflection formula $A(-1 - z) = A(z)$ from Proposition~\ref{reflection formula}.
We simplify one of the arguments by using the fact that we can differentiate $A(z)$.
We then use these congruences to prove Theorem~\ref{Lucas congruence p^2}.

First we generalize Gessel's result that the Ap\'ery numbers satisfy a Lucas congruence modulo $p$~\cite[Theorem~1]{Gessel}.

\begin{theorem}\label{general Gessel mod p}
Let $p$ be a prime.
For all $d \in \{0, 1, \dots, p - 1\}$ and for all $n \in \Z$, we have $A(d + p n) \equiv A(d) A(n) \mod p$.
\end{theorem}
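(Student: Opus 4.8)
The plan is to extend Gessel's original modulo-$p$ Lucas congruence from $n \geq 0$ to all $n \in \Z$ by using the reflection formula $A(-1-z) = A(z)$ from Proposition~\ref{reflection formula}, restricted to integers: $A(-1-m) = A(m)$ for all $m \in \Z$. Since Gessel's theorem (\cite[Theorem~1]{Gessel}) already gives $A(d + pn) \equiv A(d) A(n) \bmod p$ for $d \in \{0,1,\dots,p-1\}$ and $n \geq 0$, it suffices to handle negative $n$. So first I would assume $n < 0$ and write $n = -1 - n'$ with $n' \geq 0$.

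Next I would compute the base-$p$ digit of $d + pn$ in this regime. We have
\[
	d + pn = d + p(-1 - n') = (d - p) + p(-1 - n') + p = d - p(1 + n') \cdot \text{(rewrite)},
\]
so more carefully: $d + pn = -(p - d) - p n' = -1 - \big((p - 1 - d) + p n'\big)$. Set $d' \colonequal p - 1 - d$, which lies in $\{0, 1, \dots, p-1\}$, and $n'' \colonequal n' \geq 0$. Then by the reflection formula $A(d + pn) = A(-1 - ((d' + p n''))) = A(d' + p n'')$. Now apply Gessel's modulo-$p$ congruence (valid since $n'' \geq 0$): $A(d' + p n'') \equiv A(d') A(n'') \bmod p$. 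It remains to identify $A(d')$ with $A(d)$ and $A(n'')$ with $A(n)$ modulo $p$. For the first: $A(d') = A(p - 1 - d) \equiv A(d) \bmod p$ — this is exactly the content that $d \in D(p) \bmod p$, i.e.\ the mod-$p$ reduction of Gessel's congruence \eqref{Lucas congruence} at $n = 1$ applied to $A(d + p) = A(p(0) + \dots)$; actually the cleanest route is that $A(p-1-d) \equiv A(d) \bmod p$ follows from $A(-1-d) = A(d)$ combined with a Fermat-type digit argument, or directly from Gessel's congruence at $n=1$: $A(d + p) \equiv A(d) A(1) = 5 A(d)$ is not quite it, so instead I would derive $A(p-1-d)\equiv A(d)\bmod p$ from Gessel's own mod-$p$ Lucas congruence applied cleverly, or simply invoke that this congruence is classical. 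For the second: $A(n'') = A(-1 - n) = A(n)$ exactly, by the integer reflection formula.

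Assembling: $A(d + pn) = A(d' + p n'') \equiv A(d') A(n'') \equiv A(d) A(n) \bmod p$, completing the case $n < 0$; the case $n \geq 0$ is Gessel's theorem verbatim. The main obstacle is cleanly establishing $A(p - 1 - d) \equiv A(d) \bmod p$ for all $d \in \{0,\dots,p-1\}$ — i.e.\ that every digit lies in the mod-$p$ analogue of $D(p)$. I expect this follows either from a short direct argument using the binomial-sum formula for $A$ together with $\binom{p-1-d}{k} \equiv (-1)^k \binom{d+k}{k} \bmod p$ and $\binom{2p-2-d-k... }{...}$-type manipulations, or more slickly from Gessel's mod-$p$ Lucas congruence itself by taking $n = p - 1$ in a suitable two-digit expansion, since $d + p(p-1-d)$ and $(p-1-d) + p\cdot d$ are reflections of each other. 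Once that lemma is in hand, the rest is just bookkeeping with the reflection formula.
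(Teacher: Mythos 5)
Your argument is exactly the paper's: reduce $n \leq -1$ to the nonnegative case via $A(-1-z)=A(z)$, rewriting $A(d+pn)=A\bigl((p-1-d)+p(-1-n)\bigr)$, applying Gessel's congruence to the nonnegative index $-1-n$, and then identifying $A(p-1-d)$ with $A(d)$ modulo $p$. The paper disposes of that last lemma by citing Malik and Straub; of the routes you sketch for it, the direct one via $\binom{p-1-d}{k}\equiv(-1)^k\binom{d+k}{k}$ and $\binom{p-1-d+k}{k}\equiv(-1)^k\binom{d}{k}\bmod p$ does work, whereas trying to extract it from Gessel's congruence applied to $d+p(p-1-d)$ versus $(p-1-d)+pd$ cannot succeed, since both expansions yield the same product $A(d)A(p-1-d)$ --- indeed the lemma is precisely the $n=-1$ case of the theorem being proved, so it genuinely requires independent input rather than bookkeeping.
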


\begin{proof}
Gessel proved the statement for $n \geq 0$.
Let $n \leq -1$.
By Proposition~\ref{reflection formula},
\begin{align*}
	A(d + p n)
	&= A(-1 - (d + p n)) \\
	&= A((p - 1 - d) + p \, (-1 - n)) \\
	&\equiv A(p - 1 - d) A(-1 - n) \mod p \\
	&= A(p - 1 - d) A(n).
\end{align*}
Malik and Straub~\cite[Lemma~6.2]{Malik--Straub} proved that $A(p - 1 - d) \equiv A(d) \mod p$, which completes the proof.
\end{proof}

Next we generalize Gessel's congruence for $A(p n)$ modulo $p^3$ for $p \geq 5$ and variants for $p = 2$ and $p = 3$~\cite[Theorem~3]{Gessel}.

\begin{theorem}\label{general Gessel mod p^3}
For all $n \in \Z$,
\begin{itemize}
\item
$A(n) \equiv 5^n \mod 8$ for all $n \geq 0$ and $A(n) \equiv 5^{n + 1} \mod 8$ for all $n \leq -1$,
\item
$A(d + 3 n) \equiv A(d) A(n) \mod 9$ for all $d \in \{0, 1, 2\}$, and
\item
$A(p n) \equiv A(n) \equiv A(p n + p - 1) \mod p^3$ for all primes $p \geq 5$.
\end{itemize}
\end{theorem}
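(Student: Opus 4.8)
The plan is to reduce the entire statement to the case $n \geq 0$, which is Gessel's Theorem~3, by exploiting the reflection formula $A(-1-z)=A(z)$ of Proposition~\ref{reflection formula} --- exactly the move used in the proof of Theorem~\ref{general Gessel mod p}. So throughout, fix $n \leq -1$ and put $m \colonequal -1-n$, which is a nonnegative integer with $A(n)=A(m)$. In each of the three bullets the reflected argument will turn out to be a nonnegative integer of precisely the shape to which Gessel's result applies, and the only extra ingredient needed is a small amount of modular bookkeeping.

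For the $p=2$ bullet, Gessel gives $A(n)=A(m)\equiv 5^m \pmod{8}$. Since $5^2\equiv 1\pmod{8}$ and $m-(n+1)=-2(n+1)$ is a nonnegative even integer, $5^m\equiv 5^{n+1}\pmod{8}$, which is the claim. For the $p=3$ bullet, reflect twice: $A(d+3n)=A(-1-d-3n)=A\bigl((2-d)+3m\bigr)$, and since $2-d\in\{0,1,2\}$ and $m\geq 0$, Gessel's congruence modulo $9$ yields $A\bigl((2-d)+3m\bigr)\equiv A(2-d)\,A(m)=A(2-d)\,A(n)\pmod{9}$. It then suffices to check $A(2-d)\equiv A(d)\pmod{9}$ for $d\in\{0,1,2\}$, which is immediate from $A(0)=1$, $A(1)=5$, $A(2)=73\equiv 1\pmod{9}$. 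This plays the role here of the Malik--Straub congruence $A(p-1-d)\equiv A(d)\pmod{p}$ used in Theorem~\ref{general Gessel mod p}; for $p=3$ it even holds modulo $p^2$.

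For the $p\geq 5$ bullet, the identities $-1-pn=pm+(p-1)$ and $-1-(pn+p-1)=pm$, both readily verified from $m=-1-n$, give $A(pn)=A(pm+p-1)$ and $A(pn+p-1)=A(pm)$. Applying the $n\geq 0$ case of this bullet to the nonnegative integer $m$ yields $A(pm+p-1)\equiv A(m)\pmod{p^3}$ and $A(pm)\equiv A(m)\pmod{p^3}$, and $A(m)=A(n)$, so $A(pn)\equiv A(n)\equiv A(pn+p-1)\pmod{p^3}$.

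I do not expect a genuine obstacle: the whole argument is reflection plus bookkeeping, and the only thing to watch is confirming in each case that the reflected argument lands in the exact range covered by Gessel's theorem --- nonnegative, and of the right residue class for the $p=3$ and $p\geq 5$ statements --- together with the tiny finite verification $A(2-d)\equiv A(d)\pmod{9}$.
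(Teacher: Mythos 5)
Your reductions for the $p=2$ and $p=3$ bullets are exactly the paper's: reflection via $A(-1-z)=A(z)$, the observation $5^{-1}\equiv 5 \pmod 8$, and the finite check $A(2-d)\equiv A(d)\pmod 9$. Those parts are fine.

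The $p\geq 5$ bullet has a genuine gap. You reduce the negative case to ``the $n\geq 0$ case of this bullet,'' which you attribute wholesale to Gessel's Theorem~3. But Gessel only proved $A(pn)\equiv A(n)\pmod{p^3}$ for $n\geq 0$; the companion congruence $A(pn+p-1)\equiv A(n)\pmod{p^3}$ for $n\geq 0$ is \emph{not} in Gessel and is the main new content of the paper's proof. It cannot be recovered from Gessel's half by reflection: writing $A(pn+p-1)=A(p(-1-n))$ for $n\geq 0$ requires $A(pm)\equiv A(m)\pmod{p^3}$ at the \emph{negative} integer $m=-1-n$, which is exactly what you are trying to prove --- reflection just swaps the two congruences between the nonnegative and negative ranges, so bootstrapping from $A(pn)\equiv A(n)$ for $n\geq 0$ alone is circular. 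The paper fills this hole with a substantive direct computation: it splits $A(pn+p-1)=S_0+S_1$ according to the residue of the inner summation index modulo $p$, evaluates $S_0\equiv A(n)\pmod{p^3}$ via Jacobsthal's congruence $\binom{pa}{pb}\equiv\binom{a}{b}\pmod{p^3}$, and kills $S_1$ modulo $p^3$ using the Lucas congruence for binomial coefficients together with Wolstenholme's congruence $\sum_{d=1}^{p-1}d^{-2}\equiv 0\pmod p$. (Alternatively one could cite Straub's supercongruence, which gives $A(pn)\equiv A(n)\pmod{p^3}$ for all $n\in\Z$ directly, but you would need to invoke it explicitly; Gessel's theorem does not suffice.) Your final reflection step, once the full $n\geq 0$ case is in hand, matches the paper's.
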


A special case of a theorem of Straub~\cite[Theorem~1.2]{Straub} shows that $A(p n) \equiv A(n) \mod p^3$ for all $n \in \Z$ and all primes $p \geq 5$.
We prove this result another way, using an approach similar to Gessel's.

\begin{proof}[Proof of Theorem~\ref{general Gessel mod p^3}]
Gessel proved $A(n) \equiv 5^n \mod 8$ for all $n \geq 0$.
For $n \leq -1$, we use Proposition~\ref{reflection formula} to write
\begin{align*}
	A(n)
	= A(-1 - n)
	&\equiv 5^{-1 - n} \mod 8 \\
	&\equiv 5^{1 + n} \mod 8
\end{align*}
since $5^{-1} \equiv 5 \mod 8$.

For $p = 3$, the proof is similar to the proof of Theorem~\ref{general Gessel mod p}.
Gessel proved the statement for $n \geq 0$, so for $n \leq -1$ we have
\begin{align*}
	A(d + 3 n)
	&= A(-1 - (d + 3 n)) \\
	&= A((2 - d) + 3 (-1 - n)) \\
	&\equiv A(2 - d) A(-1 - n) \mod 9 \\
	&\equiv A(d) A(n) \mod 9
\end{align*}
since one checks that $A(2 - d) \equiv A(d) \mod 9$.

Let $p \geq 5$.
Gessel proved $A(p n) \equiv A(n) \mod p^3$ for all $n \geq 0$.
We show $A(p n + p - 1) \equiv A(n) \mod p^3$ for all $n \geq 0$.
We write
\begin{align*}
	A(p n + p - 1)
	&= \sum_{k = 0}^{p n + p - 1} \binom{p n + p - 1}{k}^2 \binom{p n + p - 1 + k}{k}^2 \\
	&= \sum_{d = 0}^{p - 1} \sum_{m = 0}^n \binom{p n + p - 1}{p m + d}^2 \binom{p \, (n + m + 1) + d - 1}{p m + d}^2 \\
	&= \sum_{d = 0}^{p - 1} \sum_{m = 0}^n \binom{p n + p - 1}{p m + d}^2 \frac{p^2 (n + 1)^2}{(p \, (n + m + 1) + d)^2} \binom{p \, (n + m + 1) + d}{p m + d}^2 \\
	&= S_0 + S_1
\end{align*}
where
\[
	S_0 = \sum_{m = 0}^n \binom{p n + p - 1}{p m}^2 \frac{(n + 1)^2}{(n + m + 1)^2} \binom{p \, (n + m + 1)}{p m}^2
\]
is the summand for $d = 0$, and
\[
	S_1 = \sum_{d = 1}^{p - 1} \sum_{m = 0}^n \binom{p n + p - 1}{p m + d}^2 \frac{p^2 (n + 1)^2}{(p \, (n + m + 1) + d)^2} \binom{p \, (n + m + 1) + d}{p m + d}^2.
\]
For $S_0$, we have
\begin{align*}
	S_0
	&= \sum_{m = 0}^n \frac{(p n + p - p m)^2}{(p n + p)^2} \binom{p n + p}{p m}^2 \frac{(n + 1)^2}{(n + m + 1)^2} \binom{p \, (n + m + 1)}{p m}^2 \\
	&\equiv \sum_{m = 0}^n \frac{(n - m + 1)^2}{(n + m + 1)^2} \binom{n + 1}{m}^2 \binom{n + m + 1}{m}^2 \mod p^3 \\
	&= \sum_{m = 0}^n \binom{n}{m}^2 \binom{n + m}{m}^2 \\
	&= A(n)
\end{align*}
by Jacobsthal's congruence $\binom{p a}{p b} \equiv \binom{a}{b} \mod p^3$, which holds for all primes $p \geq 5$~\cite{Ljunggren--Jacobsthal}.
For $S_1$, we have
\begin{align*}
	S_1
	&\equiv p^2 \sum_{d = 1}^{p - 1} \sum_{m = 0}^n \binom{p n + p - 1}{p m + d}^2 \frac{(n + 1)^2}{d^2} \binom{p \, (n + m + 1) + d}{p m + d}^2 \mod p^3 \\
	&\equiv p^2 \sum_{d = 1}^{p - 1} \sum_{m = 0}^n \binom{p - 1}{d}^2 \binom{n}{m}^2 \frac{(n + 1)^2}{d^2} \binom{d}{d}^2 \binom{n + m + 1}{m}^2 \mod p^3
\end{align*}
by the Lucas congruence for binomial coefficients modulo $p$.
Since
\[
	\binom{p - 1}{d} = \frac{(p - 1) (p - 2) \cdots (p - d)}{1 \cdot 2 \cdots d} \equiv \frac{(-1) (-2) \cdots (-d)}{1 \cdot 2 \cdots d} \equiv (-1)^d \mod p,
\]
we obtain
\begin{align*}
	S_1
	&\equiv p^2 \left(\sum_{d = 1}^{p - 1} \frac{1}{d^2}\right) \sum_{m = 0}^n \binom{n}{m}^2 (n + 1)^2 \binom{n + m + 1}{m}^2 \mod p^3 \\
	&\equiv 0 \mod p^3
\end{align*}
since $\sum_{d = 1}^{p - 1} \frac{1}{d^2} \equiv 0 \mod p$, as established by Wolstenholme~\cite{Wolstenholme}.
Therefore $A(p n + p - 1) = S_0 + S_1 \equiv A(n) \mod p^3$.

Now for $n \leq -1$ we have
\begin{align*}
	A(p n)
	&= A(-1 - p n) \\
	&= A((p - 1) + p \, (-1 - n)) \\
	&\equiv A(-1 - n) \mod p^3 \\
	&= A(n)
\end{align*}
and
\begin{align*}
	A(p n + p - 1)
	&= A(-1 - (p n + p - 1)) \\
	&= A(p \, (-1 - n)) \\
	&\equiv A(-1 - n) \mod p^3 \\
	&= A(n).
	\qedhere
\end{align*}
\end{proof}

Finally, we generalize Gessel's congruence for $A(d + p n)$ modulo $p^2$~\cite[Theorem~4]{Gessel}.
Recall that $A'(n)$ is given by Equation~\eqref{Gessel Apery derivative}.
Since $A'(n) \in \Q$ for every $n \geq 0$, it follows that if the denominator of $A'(n)$ is not divisible by $p$ then we can interpret $A'(n)$ modulo $p^2$.

\begin{theorem}\label{general Gessel mod p^2}
Let $p$ be a prime, and let $d \in \{0, 1, \dots, p - 1\}$.
The denominator of $A'(d)$ is not divisible by $p$.
Moreover, for all $n \in \Z$,
\begin{equation}\label{congruence mod p^2}
	A(d + p n) \equiv \left(A(d) + p n A'(d)\right) A(n) \mod p^2.
\end{equation}
\end{theorem}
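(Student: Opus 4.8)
The plan is to keep Gessel's argument for the case $n \ge 0$ and to deduce the case $n \le -1$ from it through the reflection formula $A(-1-z) = A(z)$ of Proposition~\ref{reflection formula}, exactly as in the proofs of Theorems~\ref{general Gessel mod p} and~\ref{general Gessel mod p^3}. For $n \le -1$ we have
\[
	A(d + pn) = A\bigl((p-1-d) + p\,(-1-n)\bigr),
\]
with $p - 1 - d \in \{0, \dots, p-1\}$ and $-1-n \ge 0$, so Congruence~\eqref{Gessel congruence modulo p^2} and $A(-1-n) = A(n)$ give
\[
	A(d + pn) \equiv \bigl(A(p-1-d) + p\,(-1-n)\,A'(p-1-d)\bigr)\,A(n) \pmod{p^2}.
\]
Everything then reduces to expressing $A(p-1-d)$ and $A'(p-1-d)$ in terms of $A(d)$ and $A'(d)$. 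I expect the two facts needed are
\[
	A(p-1-d) \equiv A(d) - p\,A'(d) \pmod{p^2}
	\qquad\text{and}\qquad
	A'(p-1-d) \equiv -A'(d) \pmod{p},
\]
together with the $p$-integrality of $A'(d)$, which is needed even to make sense of the statement. Granting these, one substitutes into the last display — the second congruence is used only modulo $p$, since it is multiplied by $p$, and $A(n) = A(-1-n)$ is an integer — and after collecting terms the right-hand side becomes $\bigl(A(d) + pn\,A'(d)\bigr)A(n) \pmod{p^2}$, which is the desired congruence.

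To prove the two facts I would expand the hypergeometric summand $p$-adically, in the same spirit as the proof of Theorem~\ref{multiple zeta value formula}. Write $t_k(z) \colonequal \binom{z}{k}^2 \binom{z+k}{k}^2$, so that $A(z) = \sum_{k \ge 0} t_k(z)$ and, differentiating termwise, $A'(d) = \sum_{k \ge 0} t_k'(d)$ with $t_k'(d) = 2\,t_k(d)\,(H_{d+k} - H_{d-k})$. Since $A(-1-z) = A(z)$ as entire functions, differentiation gives $A'(-1-z) = -A'(z)$, hence $A(p-1-d) = A(d-p)$ and $A'(p-1-d) = -A'(d-p)$; so it is enough to compare the values at $z = d$ and $z = d - p$. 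For indices $k$ with $d + k < p$, equivalently $k \le \min(d, p-1-d)$, the numbers $H_{d \pm k}$ and all reciprocals occurring below are $p$-integral, and
\[
	\frac{\binom{d-p}{k}}{\binom{d}{k}} = \prod_{j=0}^{k-1}\Bigl(1 - \frac{p}{d - j}\Bigr) \equiv 1 - p\,(H_d - H_{d-k}) \pmod{p^2},
\]
and similarly $\binom{d-p+k}{k} \equiv \binom{d+k}{k}\bigl(1 - p\,(H_{d+k} - H_d)\bigr) \pmod{p^2}$; multiplying these gives $t_k(d-p) \equiv t_k(d)\bigl(1 - 2p\,(H_{d+k} - H_{d-k})\bigr) \pmod{p^2}$, and reducing the analogous expansion of $t_k'(d-p) = t_k(d-p)\cdot 2\bigl(\sum_{j=0}^{k-1}\frac{1}{d-p-j} + \sum_{i=1}^{k}\frac{1}{d-p+i}\bigr)$ modulo $p$ gives $t_k'(d-p) \equiv t_k'(d) \pmod{p}$. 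Summing over the admissible $k$ and using $A(z) = \sum_k t_k(z)$ and $A'(z) = \sum_k t_k'(z)$ yields $A(d-p) \equiv A(d) - p\,A'(d) \pmod{p^2}$ and $A'(d-p) \equiv A'(d) \pmod{p}$ — provided the omitted indices contribute nothing.

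Controlling those omitted indices is the main obstacle, and it is exactly where the hypothesis $d \le p - 1$ is used. For $0 \le d, k \le p - 1$ with $d + k \ge p$, adding $d$ and $k$ in base $p$ produces exactly one carry, so $p \mid \binom{d+k}{k}$ by Kummer's theorem and hence $p^2 \mid t_k(d)$. Thus such indices do not affect $A(d)$ modulo $p^2$; moreover $t_k(d)(H_{d+k} - H_{d-k})$ has $p$-adic valuation at least $2 - 1 = 1$, since $H_{d+k}$ has $p$-adic valuation $-1$ (its denominators contain exactly one multiple of $p$, as $d + k \le 2p - 2$) while $H_{d-k}$ is $p$-integral (as $d - k < p$), so $A'(d)$ is $p$-integral and these indices do not affect $A'(d)$ modulo $p$. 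A parallel valuation count handles the indices with $k > d$ that arise for $A(d-p)$ and $A'(d-p)$: there $t_k(d-p) = 0$ once $k \ge p - d$, and for $d < k < p - d$ one has $p^2 \mid t_k(d-p)$ and $t_k'(d-p)$ has $p$-adic valuation at least $1$. That the hypothesis $d \le p - 1$ cannot be dropped is visible already in $A'(7) = \tfrac{67890874657}{35}$, whose denominator is divisible by $7$. Once this bookkeeping is in place, the reduction of the first paragraph completes the proof.
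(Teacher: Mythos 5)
Your argument is correct, but it takes a genuinely different route from the paper's. The paper does not split into $n\ge 0$ and $n\le -1$ at all: for fixed $n\in\Z$ it writes $A(d+pn)\equiv A(d)A(n)+pc_d \pmod{p^2}$ (using Theorem~\ref{general Gessel mod p}) and shows that $(c_d)_d$ and $(nA'(d)A(n))_d$ satisfy the same recurrence in $d$ modulo $p$, obtained by substituting $d+pn$ into the Ap\'ery recurrence and Taylor-expanding its polynomial coefficients, with matching initial value $c_0=0$ supplied by Theorem~\ref{general Gessel mod p^3}; the $p$-integrality of $A'(d)$ drops out of the differentiated recurrence because $A'(0),A'(1)\in\Z$ and $r_0(d)=d^3$ is a unit mod $p$. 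You instead take Gessel's $n\ge 0$ case as a black box and transport it to $n\le -1$ through the reflection formula, which forces you to prove $A(d-p)\equiv A(d)-pA'(d)\pmod{p^2}$ and $A'(d-p)\equiv A'(d)\pmod p$ by $p$-adic expansion of $\binom{z}{k}^2\binom{z+k}{k}^2$ at $z=d$ and $z=d-p$. I checked your bookkeeping and it goes through: the ``good'' range is the set of $k$ with $k\le d$ \emph{and} $d+k<p$ (your phrase ``equivalently $k\le\min(d,p-1-d)$'' conflates these two conditions, but you do treat both complementary ranges correctly), the Kummer valuation counts for the bad indices are right, and for $k\ge p-d$ you should say explicitly that $t_k$ has a \emph{double} zero at $z=d-p$, so that $t_k'(d-p)=0$ as well --- you need this vanishing of the derivative, not just of the value, for the mod-$p$ congruence of $A'$. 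The trade-off: the paper's recurrence argument is shorter, self-contained for all $n\in\Z$, and avoids all index-by-index $p$-adic analysis; your route produces the congruence $A(p-1-d)\equiv A(d)-pA'(d)\pmod{p^2}$ directly (the paper only extracts it afterwards, as the $n=-1$ case of this theorem, when characterizing $D(p)$), at the cost of more delicate valuation bookkeeping and a reliance on Gessel's original proof for $n\ge 0$.
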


\begin{proof}
Gessel proved the statement for $n \geq 0$.
The same approach allows us to prove the general case.

Fix $n \in \Z$.
For each $d \in \{0, 1, \dots, p - 1\}$, define $c_d \in \{0, 1, \dots, p - 1\}$ such that $A(d + p n) \equiv A(d) A(n) + p c_d \mod p^2$; this can be done by Theorem~\ref{general Gessel mod p}.
Let $c_{-1} = 0$.
(The value of $c_{-1}$ does not actually matter, since it will be multiplied by $0$.)
We show that $(c_d)_{0 \leq d \leq p - 1}$ and $(n A'(d) A(n))_{0 \leq d \leq p - 1}$ satisfy the same recurrence and initial conditions modulo $p$; this will imply $c_d \equiv n A'(d) A(n) \mod p$.
Theorem~\ref{general Gessel mod p^3} implies that $A(p n) \equiv A(n) \mod p^2$, so $c_0 = 0$.
Since $A'(0) = 0$, the initial conditions are equal.

Let $d \in \{1, 2, \dots, p - 1\}$.
Write Equation~\eqref{Apery recurrence} as
\begin{equation}\label{abbreviated Apery recurrence}
	\sum_{i = 0}^2 r_i(n) A(n - i) = 0,
\end{equation}
where each $r_i(n)$ is a polynomial in $n$ with integer coefficients.
Note that Equation~\eqref{abbreviated Apery recurrence} holds for all $n \in \Z$.
Substituting $d + p n$ for $n$ in Equation~\eqref{abbreviated Apery recurrence} gives
\[
	\sum_{i = 0}^2 r_i(d + p n) A(d - i + p n) = 0.
\]
If $d - i = -1$ then $r_i(d + p n) = r_2(1 + p n) = (p n)^3 \equiv 0 \mod p^2$, hence the arbitrary value of $c_{-1}$.
Therefore, using the Taylor expansion of $r_i(n)$, we have
\[
	\sum_{i = 0}^2 \big(r_i(d) + p n r_i'(d)\big) \big(A(d - i) A(n) + p c_{d - i}\big) \equiv 0 \mod p^2.
\]
Since $\sum_{i = 0}^2 r_i(d) A(d - i) = 0$, expanding and dividing by $p$ gives
\[
	\sum_{i = 0}^2 \big(r_i(d) c_{d - i} + n r_i'(d) A(d - i) A(n)\big) \equiv 0 \mod p.
\]
This gives a recurrence satisfied by $(c_d)_{0 \leq d \leq p - 1}$ that can be used to compute $c_1, c_2, \dots, c_{p - 1}$ since $r_0(d) = d^3 \nequiv 0 \mod p$.

To obtain a recurrence for $(n A'(d) A(n))_{0 \leq d \leq p - 1}$, we differentiate Equation~\eqref{interpolation} to obtain
\[
	\sum_{i = 0}^2 \big(r_i(d) A'(d - i) + r_i'(d) A(d - i)\big) = 0.
\]
Since $A'(0)$ and $A'(1)$ are integers and $r_0(d) \nequiv 0 \mod p$, the denominator of $A'(d)$ is not divisible by $p$.
By multiplying by $n A(n)$, we obtain
\[
	\sum_{i = 0}^2 \big(r_i(d) n A'(d - i) A(n) + n r_i'(d) A(d - i) A(n)\big) = 0.
\]
By subtracting this from the recurrence for $(c_d)_{0 \leq d \leq p - 1}$, we see that
\[
	\sum_{i = 0}^2 r_i(d) \big(c_{d - i} - n A'(d - i) A(n)\big) \equiv 0 \mod p.
\]
Since $r_0(d) \nequiv 0 \mod p$, it follows that $c_d \equiv n A'(d) A(n) \mod p$ for all $d \in \{0, 1, \dots, p - 1\}$.
\end{proof}

In the case $p = 3$, Theorem~\ref{general Gessel mod p^2} gives a second proof of the congruence $A(d + 3 n) \equiv A(d) A(n) \mod 9$ from Theorem~\ref{general Gessel mod p^3}, since $A'(0) \equiv A'(1) \equiv A'(2) \equiv 0 \mod 3$.
For larger primes, in general $A(d + p n) \nequiv A(d) A(n) \mod p^2$.
However, if we restrict to certain sets of base-$p$ digits, then we do obtain congruences that hold modulo $p^2$.
For example, if $d \in \{0, 2, 4\}$, then
\[
	A(d + 5 n) \equiv A(d) A(n) \mod 25.
\]
This was proven by the authors~\cite{Rowland--Yassawi diagonals} by computing an automaton for $A(n) \bmod 25$.
Since $A(0) \equiv 1 \equiv A(4) \mod 25$ and $A(2) \equiv 23 \mod 25$, this implies $A(n) \equiv 23^{e_2(n)} \mod 25$ for all $n \geq 0$ whose base-$5$ digits belong to $\{0, 2, 4\}$, where $e_2(n)$ is the number of $2$s in the base-$5$ representation of $n$.
Theorem~\ref{Lucas congruence p^2}, reformulated as the following theorem, generalizes this result to other primes.

We say that the set $D \subseteq \{0, 1, \dots, p - 1\}$ supports a \emph{Lucas congruence} for the sequence $s(n)_{n \in \Z}$ modulo $p^\alpha$ if $s(d + p n) \equiv s(d) s(n) \mod p^\alpha$ for all $d \in D$ and for all $n \in \Z$.
As mentioned in the proof of Theorem~\ref{general Gessel mod p}, Malik and Straub~\cite[Lemma~6.2]{Malik--Straub} proved that $A(d) \equiv A(p - 1 - d) \mod p$ for each $d \in \{0, 1, \dots, p - 1\}$.
Let $D(p)$ be the set of base-$p$ digits for which this congruence holds modulo $p^2$; that is,
\[
	D(p) = \left\{d \in \{0, 1, \dots, p - 1\} : A(d) \equiv A(p - 1 - d) \mod p^2\right\}.
\]

\begin{theorem}
The set $D(p)$ is the maximum set of digits that supports a Lucas congruence for the Ap\'ery numbers modulo $p^2$.
\end{theorem}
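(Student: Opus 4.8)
The plan is to prove the equivalent assertion that for each $d \in \{0, 1, \dots, p-1\}$ the congruence $A(d + p n) \equiv A(d) A(n) \mod p^2$ holds for all $n \in \Z$ if and only if $d \in D(p)$; this is Theorem~\ref{Lucas congruence p^2}. The ``maximum set'' formulation then follows formally: by definition a set $D$ supports a Lucas congruence modulo $p^2$ exactly when every $d \in D$ makes the congruence hold for all $n$, so the collection of all such $d$ — which by the equivalence is precisely $D(p)$ — is the largest set with this property. The starting point is Theorem~\ref{general Gessel mod p^2}, which gives $A(d + p n) \equiv \bigl(A(d) + p n A'(d)\bigr) A(n) \mod p^2$ for all $n \in \Z$ and also guarantees that the denominator of $A'(d)$ is prime to $p$, so $A'(d)$ has a well-defined residue modulo $p$. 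Subtracting $A(d)A(n)$, the Lucas congruence modulo $p^2$ holds for a fixed $d$ and all $n$ if and only if $p n A'(d) A(n) \equiv 0 \mod p^2$ for all $n \in \Z$, that is, if and only if $n A'(d) A(n) \equiv 0 \mod p$ for all $n$.

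Next I would show that this last condition is equivalent to $A'(d) \equiv 0 \mod p$. One implication is trivial; for the converse it is enough to produce a single $n$ with $p \nmid n A(n)$, and $n = p - 1$ works for every prime $p$, since $p \nmid p - 1$ and, by the congruence $A(p - 1 - d') \equiv A(d') \mod p$ of Malik and Straub (with $d' = 0$, already invoked in the proof of Theorem~\ref{general Gessel mod p}), we have $A(p - 1) \equiv A(0) = 1 \nequiv 0 \mod p$. Hence the Lucas congruence modulo $p^2$ holds for $d$ and all $n$ if and only if $A'(d) \equiv 0 \mod p$.

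It remains to identify the condition $A'(d) \equiv 0 \mod p$ with membership in $D(p)$. Here I would specialize Theorem~\ref{general Gessel mod p^2} to $n = -1$. By Proposition~\ref{reflection formula}, $A(-1) = A(-1 - 0) = A(0) = 1$, so the theorem yields $A(d - p) \equiv A(d) - p A'(d) \mod p^2$. On the other hand $d - p = -1 - (p - 1 - d)$ with $p - 1 - d \geq 0$, so Proposition~\ref{reflection formula} also gives $A(d - p) = A(p - 1 - d)$. Combining these, $A(p - 1 - d) \equiv A(d) - p A'(d) \mod p^2$, and therefore $A(d) \equiv A(p - 1 - d) \mod p^2$ if and only if $p A'(d) \equiv 0 \mod p^2$, i.e. if and only if $A'(d) \equiv 0 \mod p$. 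Chaining the equivalences of the last three paragraphs gives $d \in D(p)$ if and only if $A(d + p n) \equiv A(d) A(n) \mod p^2$ for all $n \in \Z$, which is what is wanted.

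The argument is short, so there is no serious obstacle; the one point that needs care is the choice of $n$ in the step $n A'(d) A(n) \equiv 0 \mod p \Rightarrow A'(d) \equiv 0 \mod p$, where $n$ must be chosen uniformly in $p$. This is exactly why $n = p - 1$ is used rather than the naive $n = 1$: the value $A(1) = 5$ would fail for the prime $5$, whereas $A(p - 1) \equiv 1 \mod p$ holds for every $p$. Beyond that, the proof relies entirely on the extensions of Gessel's congruences to negative $n$ — in particular the substitution $n = -1$ in Theorem~\ref{general Gessel mod p^2} and the reflection formula $A(-1 - z) = A(z)$ — which is precisely the reason those extensions were established at the start of Section~\ref{congruences}.
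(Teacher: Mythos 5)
Your proof is correct and follows essentially the same route as the paper: both use Theorem~\ref{general Gessel mod p^2} to reduce the Lucas congruence modulo $p^2$ to the condition $A'(d) \equiv 0 \bmod p$, and both identify that condition with membership in $D(p)$ by setting $n = -1$ and invoking the reflection formula $A(d-p) = A(p-1-d)$. The only cosmetic difference is your choice of witness $n = p-1$ (justified via Malik--Straub) in the step forcing $A'(d) \equiv 0 \bmod p$, where the paper simply reuses $n = -1$ with $A(-1) = A(0) = 1$.
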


\begin{proof}
Let $d \in D(p)$, so that $A(d) \equiv A(p - 1 - d) \mod p^2$.
Letting $n = -1$ in Theorem~\ref{general Gessel mod p^2} gives $A(d - p) \equiv A(d) - p A'(d) \mod p^2$.
Applying Proposition~\ref{reflection formula}, we find
\begin{align*}
	p A'(d)
	&\equiv A(d) - A(d - p) \mod p^2 \\
	&= A(d) - A(p - 1 - d) \\
	&\equiv 0 \mod p^2.
\end{align*}
Therefore it follows from Theorem~\ref{general Gessel mod p^2} that, for all $n \in \Z$,
\begin{align*}
	A(d + p n)
	&\equiv \left(A(d) + p n A'(d)\right) A(n) \mod p^2 \\
	&\equiv A(d) A(n) \mod p^2.
\end{align*}
Therefore $D(p)$ supports a Lucas congruence for the Ap\'ery numbers modulo $p^2$.

To see that $D(p)$ is the maximum such set, assume $A(d + p n) \equiv A(d) A(n) \mod p^2$ for all $n \in \Z$.
Then
\begin{align*}
	\left(A(d) + p n A'(d)\right) A(n)
	&\equiv A(d + p n) \mod p^2 \\
	&\equiv A(d) A(n) \mod p^2,
\end{align*}
and it follows that $p n A'(d) A(n) \equiv 0 \mod p^2$ for all $n \in \Z$.
Therefore $A(d) - A(p - 1 - d) = A(d) - A(d - p) \equiv p A'(d) \equiv 0 \mod p^2$.
\end{proof}

As a special case, we obtain the following congruence, since $\{0, p - 1\} \subseteq D(p)$ by Theorem~\ref{general Gessel mod p^3}, 
and $A(0) = 1 \equiv A(p - 1) \mod p^2$.

\begin{corollary}\label{Lucas congruence p^2 - special case}
Let $p \neq 2$ and $n \geq 0$.
If the base-$p$ digits of $n$ all belong to $\{0, \frac{p - 1}{2}, p - 1\}$, then $A(n) \equiv A(\frac{p - 1}{2})^{e(n)} \mod p^2$ where $e(n)$ is the number of occurrences of the digit $\frac{p - 1}{2}$.
\end{corollary}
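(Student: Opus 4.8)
The plan is to reduce the corollary to the digit-wise Lucas congruence modulo $p^2$ recorded immediately after Theorem~\ref{Lucas congruence p^2}, by checking that all three allowed digits lie in $D(p)$.

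First I would verify that $\{0, \frac{p-1}{2}, p-1\} \subseteq D(p)$. For $d = \frac{p-1}{2}$ this is immediate: then $p - 1 - d = d$, so the defining condition $A(d) \equiv A(p - 1 - d) \mod p^2$ of $D(p)$ holds trivially. For $d = 0$ and $d = p - 1$ it amounts to $A(0) \equiv A(p - 1) \mod p^2$, which follows from Theorem~\ref{general Gessel mod p^3}: for $p \geq 5$, taking $n = 0$ in $A(pn + p - 1) \equiv A(n) \mod p^3$ gives $A(p - 1) \equiv A(0) = 1 \mod p^3$; for $p = 3$, combining the instance $d = 2$, $n = -1$ of $A(d + 3n) \equiv A(d)A(n) \mod 9$ with $A(-1) = A(0) = 1$ from Proposition~\ref{reflection formula} gives $A(2) \equiv 1 \mod 9$. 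In either case $A(p - 1) \equiv A(0) = 1 \mod p^2$, as already noted in the statement of the corollary.

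Next, writing the base-$p$ representation of $n$ as $n_\ell \cdots n_1 n_0$, every digit $n_i$ now lies in $D(p)$, so the consequence of Theorem~\ref{Lucas congruence p^2} displayed after its statement yields
\[
	A(n) \equiv A(n_0) A(n_1) \cdots A(n_\ell) \mod p^2.
\]
I would then substitute the values of $A$ at the allowed digits: $A(0) = 1$, $A(p - 1) \equiv 1 \mod p^2$ by the previous step, and each factor coming from a digit equal to $\frac{p-1}{2}$ contributes $A(\frac{p-1}{2})$. Collecting these gives $A(n) \equiv A(\frac{p-1}{2})^{e(n)} \mod p^2$, where $e(n)$ is the number of digits of $n$ equal to $\frac{p-1}{2}$.

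There is no real obstacle here: the corollary simply packages Theorem~\ref{Lucas congruence p^2} with Theorem~\ref{general Gessel mod p^3}. The one point worth a moment's care is the membership $\frac{p-1}{2} \in D(p)$, which at first looks as though it ought to require information about $A(\frac{p-1}{2})$ modulo $p^2$ but is in fact forced by the fixed-point identity $p - 1 - \frac{p-1}{2} = \frac{p-1}{2}$ in the definition of $D(p)$.
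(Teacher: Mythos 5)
Your proposal is correct and follows essentially the same route as the paper, which justifies the corollary in a single sentence by noting $\{0, p-1\} \subseteq D(p)$ (via Theorem~\ref{general Gessel mod p^3}) together with $A(0) = 1 \equiv A(p-1) \bmod p^2$, leaving implicit the observation you make explicit that $\frac{p-1}{2} \in D(p)$ automatically because it is the fixed point of $d \mapsto p-1-d$. Your extra care with the $p=3$ case and the digit-wise iteration just fills in details the paper omits.
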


These are the first several primes with digit sets $D(p)$ containing at least $4$ digits:
\[
	\begin{array}{c|c}
		p	& D(p) \\ \hline
		7	& \{0, 2, 3, 4, 6\} \\
		23	& \{0, 7, 11, 15, 22\} \\
		43	& \{0, 5, 18, 21, 24, 37, 42\} \\
		59	& \{0, 6, 29, 52, 58\} \\
		79	& \{0, 18, 39, 60, 78\} \\
		103	& \{0, 17, 51, 85, 102\} \\
		107	& \{0, 14, 21, 47, 53, 59, 85, 92, 106\} \\
		127	& \{0, 17, 63, 109, 126\} \\
		131	& \{0, 62, 65, 68, 130\} \\
		139	& \{0, 68, 69, 70, 138\} \\
		151	& \{0, 19, 75, 131, 150\} \\
		167	& \{0, 35, 64, 83, 102, 131, 166\}
	\end{array}
\]

A natural question, which we do not address here, is the following.
How big can $\lvert D(p) \rvert$ be, as a function of $p$?

Theorem~\ref{general Gessel mod p^3} implies the following Lucas congruence modulo $p^3$.

\begin{theorem}\label{Lucas congruence p^3}
Let $p \geq 5$ and $n \geq 0$.
If the base-$p$ digits of $n$ all belong to $\{0, p - 1\}$, then $A(n) \equiv 1 \mod p^3$.
\end{theorem}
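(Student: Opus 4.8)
The plan is to derive Theorem~\ref{Lucas congruence p^3} as an immediate consequence of the third bullet of Theorem~\ref{general Gessel mod p^3}, namely the congruences $A(p n) \equiv A(n) \mod p^3$ and $A(p n + p - 1) \equiv A(n) \mod p^3$, valid for all primes $p \geq 5$ and all $n \in \Z$. I would argue by induction on $n$.

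For the base case $n = 0$, we have $A(0) = \binom{0}{0}^2 \binom{0}{0}^2 = 1$, so $A(0) \equiv 1 \mod p^3$. For the inductive step, let $n \geq 1$ have all base-$p$ digits in $\{0, p - 1\}$, and write $n = p q + d$, where $d \in \{0, 1, \dots, p - 1\}$ is the least significant base-$p$ digit of $n$ and $q = \lfloor n / p \rfloor$. The digits of $q$ are precisely the remaining (more significant) digits of $n$, so they all lie in $\{0, p - 1\}$; moreover $q < n$ since $n \geq 1$, so the inductive hypothesis gives $A(q) \equiv 1 \mod p^3$. By assumption $d \in \{0, p - 1\}$. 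If $d = 0$, then $A(n) = A(p q) \equiv A(q) \mod p^3$ by the first of the two cited congruences (applied with $q$ in place of $n$); if $d = p - 1$, then $A(n) = A(p q + p - 1) \equiv A(q) \mod p^3$ by the second. In either case $A(n) \equiv A(q) \equiv 1 \mod p^3$, completing the induction.

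The point is that essentially all of the work has already been done: Theorem~\ref{general Gessel mod p^3} is precisely the statement needed to ``strip off'' a digit $0$ or $p - 1$ while preserving the residue modulo $p^3$. Accordingly I do not anticipate a genuine obstacle here -- the only thing to check is that both admissible digit values are covered, which is exactly why we need both halves $A(p n) \equiv A(n)$ and $A(p n + p - 1) \equiv A(n)$ of that theorem. The hypothesis $p \geq 5$ is inherited from Theorem~\ref{general Gessel mod p^3}, whose proof uses Jacobsthal's congruence (valid for $p \geq 5$) together with Wolstenholme's congruence; for $p \in \{2, 3\}$ one only has analogues modulo lower powers of $p$, so the modulus $p^3$ in the conclusion should not be expected in general.
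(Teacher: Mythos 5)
Your proof is correct and is exactly the argument the paper intends: the paper derives this result directly from Theorem~\ref{general Gessel mod p^3} by stripping off base-$p$ digits $0$ and $p-1$ one at a time, which is precisely your induction. The paper simply leaves the induction implicit, so there is nothing further to add.
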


Experiments do not suggest the existence of any additional Lucas congruences for the Ap\'ery numbers modulo $p^3$.
We leave this as open question.

\section*{Acknowledgement}

We thank Manon Stipulanti for productive discussions.

\end{document}